\newcommand{\arrowIn}{
\tikz \draw[-stealth] (-1pt,0) -- (1pt,0);
}
\newcommand{\arrowOut}{
\tikz \draw[-stealth] (1pt,0) -- (-1pt,0);
}
\tikzset{empty/.style={black, fill=white}}
\def\NZQ{\mathbb}               
\def\ZZ{{\NZQ Z}}
\def\G{{\mathcal G}}
\def\pd{\textup{pd}}
\def\opn#1#2{\def#1{\operatorname{#2}}} 
\opn\chara{char} \opn\length{\ell} \opn\pd{pd} \opn\rk{rk}
\opn\projdim{proj\,dim} \opn\injdim{inj\,dim} \opn\rank{rank}
\opn\depth{depth} \opn\grade{grade} \opn\height{height}
\opn\embdim{emb\,dim} \opn\codim{codim}
\opn\Tr{Tr} \opn\bigrank{big\,rank}
\opn\superheight{superheight}\opn\lcm{lcm}
\opn\trdeg{tr\,deg}
\opn\reg{reg} \opn\lreg{lreg} \opn\ini{in} \opn\lpd{lpd}
\opn\size{size} \opn\sdepth{sdepth}
\opn\link{link}\opn\fdepth{fdepth}\opn\lex{lex}
\opn\tr{tr}
\opn\type{type}
\opn\gap{gap}
\opn\diam{diam}
\opn\Mod{Mod}
\opn\div{div} \opn\Div{Div} \opn\cl{cl} \opn\Cl{Cl}
\opn\Spec{Spec} \opn\Supp{Supp} \opn\supp{supp} \opn\Sing{Sing}
\opn\Ass{Ass} \opn\Min{Min}\opn\Mon{Mon}
\opn\Ann{Ann} \opn\Rad{Rad} \opn\Soc{Soc}
\opn\Im{Im} \opn\Ker{Ker} \opn\Coker{Coker} \opn\Am{Am}
\opn\Hom{Hom} \opn\Tor{Tor} \opn\Ext{Ext} \opn\End{End}
\opn\Aut{Aut} \opn\id{id}
\opn\nat{nat}
\opn\pff{pf}
\opn\Pf{Pf} \opn\GL{GL} \opn\SL{SL} \opn\mod{mod} \opn\ord{ord}
\opn\Gin{Gin} \opn\Hilb{Hilb}\opn\sort{sort}
\opn\PF{PF}\opn\Ap{Ap}
\opn\dist{dist}
\opn\aff{aff}
\opn\relint{relint} \opn\st{st}
\opn\lk{lk} \opn\cn{cn} \opn\core{core} \opn\vol{vol}  \opn\inp{inp} \opn\nilpot{nilpot}
\opn\link{link} \opn\star{star}\opn\lex{lex}\opn\set{set}
\opn\width{wd}
\opn\Fr{F}
\opn\QF{QF}
\opn\G{G}
\opn\type{type}\opn\res{res}
\opn\conv{conv}
\opn\sr{sr}
\opn\gr{gr}
\def\pot#1#2{#1[\kern-0.28ex[#2]\kern-0.28ex]}
\opn\dirlim{\underrightarrow{\lim}}
\opn\inivlim{\underleftarrow{\lim}}
\def\Implies{\ifmmode\Longrightarrow \else
	\unskip${}\Longrightarrow{}$\ignorespaces\fi}
\def\implies{\ifmmode\Rightarrow \else
	\unskip${}\Rightarrow{}$\ignorespaces\fi}
\def\iff{\ifmmode\Longleftrightarrow \else
	\unskip${}\Longleftrightarrow{}$\ignorespaces\fi}
\newtheorem{Theorem}{Theorem}[section]
\newtheorem{Lemma}[Theorem]{Lemma}
\newtheorem{Remark}[Theorem]{Remark}
\let\epsilon\varepsilon
\let\kappa=\varkappa
\def\qed{\ifhmode\textqed\fi
	\ifmmode\ifinner\hfill\quad\qedsymbol\else\dispqed\fi\fi}
\def\textqed{\unskip\nobreak\penalty50
	\hskip2em\hbox{}\nobreak\hfill\qedsymbol
	\parfillskip=0pt \finalhyphendemerits=0}
\def\dispqed{\rlap{\qquad\qedsymbol}}
\begin{document}

	\title{Forests whose matching powers are linear}
	\author{Nursel Erey, Antonino Ficarra}
	
	\address{Nursel Erey, Gebze Technical University, Department of Mathematics, 41400 Gebze, Kocaeli, Turkey}
	\email{nurselerey@gtu.edu.tr}
	
	\address{Antonino Ficarra, Department of mathematics and computer sciences, physics and earth sciences, University of Messina, Viale Ferdinando Stagno d'Alcontres 31, 98166 Messina, Italy}
	\email{antficarra@unime.it}
	
	
	\subjclass[2020]{Primary 13F20; Secondary 05E40}
	
	\keywords{Edge Ideals, Linear Resolutions, Matching Powers, Polymatroids, Weighted Graphs}
	
	\maketitle
	
	\begin{abstract}
      In this note, we classify all the weighted oriented forests whose edge ideals have the property that one of their matching powers has linear resolution.
	\end{abstract}
	
	\section*{Introduction}
	
	Let $G$ be a finite simple graph on vertex set $V(G)=[n]=\{1,\dots,n\}$ and edge set $E(G)$. Let $S=K[x_1,\dots,x_n]$ be the polynomial ring over a field $K$. The \textit{edge ideal} of $G$ is the monomial ideal $I(G)=(x_ix_j:\{i,j\}\in E(G))$. A fashionable topic in combinatorial commutative algebra consists of studying the interplay between the algebraic properties of $I(G)$ and the combinatorial properties of $G$.
	
	In \cite{EHHM2022a}, motivated by the classical theory of matchings, the authors introduced the concept of \textit{squarefree power}. Let $I\subset S$ be a squarefree monomial ideal. The $k$th squarefree power $I^{[k]}$ of $I$ is the monomial ideal whose generators are the squarefree generators of $I^k$. When $I=I(G)$, the generators of $I^{[k]}$ are the monomials $(x_{i_1}x_{j_1})\cdots(x_{i_k}x_{j_k})$ such that $M=\{\{i_1,j_1\},\dots,\{i_k,j_k\}\}$ is a $k$-matching of $G$. Recall that a $k$-matching of $G$ is a subset $M\subset E(G)$ of size $k$ such that $e\cap e'=\emptyset$ for all $e,e'\in  M$ with $e\ne e'$. We denote by $\nu(G)$ the \textit{matching number} of $G$ which is the maximum size of a matching of $G$.
	
	In \cite{EF}, we extended the concept of squarefree power to any monomial ideal, see also \cite{BHZN18,CFL,EH2021,EHHM2022a,EHHM2022b,FPack2,SASF2022,SASF2023,SASF2023-2}. Let $I\subset S$ be any monomial ideal, not necessarily squarefree. The $k$th \textit{matching power} $I^{[k]}$ of $I$ is the monomial ideal generated by the products $u_1\cdots u_k$, where $u_1,\dots,u_k$ is a monomial regular sequence contained in $I$. That is, $u_1,\dots,u_k\in I$ and $\supp(u_i)\cap\supp(u_j)=\emptyset$. Here, for a monomial $w\in S$, we set $\supp(w)=\{i:x_i\ \textup{divides}\ w\}$. When $I$ is squarefree, the matching powers of $I$ coincide with the squarefree powers of $I$. The biggest $k$ such that $I^{[k]}$ is called the \textit{monomial grade} of $I$ and is denoted by $\nu(I)$. Notice that $\nu(I(G))=\nu(G)$.
    
    Introducing this new concept allows us to consider edge ideals of weighted oriented graphs besides those of simple graphs. 
    
    A (\textit{vertex})-\textit{weighted oriented graph} $\mathcal{D}=(V(\mathcal{D}),E(\mathcal{D}),w)$ consists of an underlying graph $G$, with $V(\mathcal{D})=V(G)=[n]$, on which each edge is given an orientation and it is equipped with a \textit{weight function} $w:V(G)\rightarrow\mathbb{Z}_{\ge1}$, see \cite{HLMRV,PRT}. The \textit{weight} $w(i)$ of a vertex $i\in V(G)$ is denoted by $w_i$. The directed edges of $\mathcal{D}$ are denoted by pairs $(i,j)\in E(\mathcal{D})$ to reflect the orientation, hence $(i,j)$ represents an edge directed from $i$ to $j$. The \textit{edge ideal} of $\mathcal{D}$ is defined as the ideal
    $$
    I(\mathcal{D})\ =\ (x_ix_j^{w_j}\ :\ (i,j)\in E(\mathcal{D})).
    $$
    If $w_i=1$ for all $i\in V(G)$, then $I(\mathcal{D})=I(G)$ is the usual edge ideal of $G$.
    
    Note that the ideal $I(\mathcal{D})^{[k]}$ is generated by the products $u=u_1\cdots u_k$, where $u_p=x_{i_p}x_{j_p}^{w_{j_p}}\in G(I(\mathcal{D}))$, such that $M=\{\{i_1,j_1\},\dots,\{i_k,j_k\}\}$ is a matching of $G$. This observation justifies the choice to name $I^{[k]}$ the $k$th matching power of $I$. We set $\nu(\mathcal{D})=\nu(G)$ and notice that $\nu(I(\mathcal{D}))=\nu(\mathcal{D})$.
    
    In \cite{EF}, we deeply studied the matching powers of $I(\mathcal{D})$. In particular, we dealt with the problem of characterizing those matching powers $I(\mathcal{D})^{[k]}$ which are linearly related or have linear resolution.
    
    
    The note proceeds as follows. In Section \ref{sec:1-EreyFic}, we give a new proof of one of our main results from \cite{EF}. Namely we reprove that the last non-vanishing matching power of a quadratic monomial ideal is polymatroidal. Section \ref{sec:2-EreyFic} summarizes some of the results we use from \cite{EF}. They are used to prove the main result of Section \ref{sec:3-EreyFic} which is Theorem \ref{Thm:ForestPolym}. This result characterizes all weighted oriented forests $\mathcal{D}$ such that $I(\mathcal{D})^{[k]}$ has linear resolution for some $k$.
     
	\section{The last matching power of a quadratic monomial ideal}\label{sec:1-EreyFic}
	
	For a monomial $u\in S$, we set $\deg_{x_i}(u)=\max\{j:x_i^j\ \textup{divides}\ u\}$.
	
	Let $I\subset S$ be a monomial ideal. We denote by $G(I)$ the minimal monomial generating set of $I$. The ideal $I$ is called \textit{polymatroidal} if it is generated in a single degree and the  \textit{exchange property} holds: for all $u,v\in G(I)$ and all $i$ with $\deg_{x_i}(u)>\deg_{x_i}(v)$ there exists $j$ such that $\deg_{x_j}(u)<\deg_{x_j}(v)$ and $x_j(u/x_i)\in G(I)$. A squarefree polymatroidal ideal is called \textit{matroidal}.
	
	In \cite[Theorem 1.7]{EF} we proved that $I^{[\nu(I)]}$ is a polymatroidal ideal, for any quadratic monomial ideal $I\subset S$. The proof of this fact is based on a well-known result of Edmonds and Fulkerson (see \cite[Theorem 1 on page 246]{W}) which states that $I(G)^{[\nu(G)]}$ is polymatroidal for any graph $G$. In particular, $I(G)^{[\nu(G)]}$ has linear resolution, which was independently proved by  Bigdeli et al. in \cite[Theorem 4.1]{BHZN18}.
	
	In this section, we offer a new proof of the theorem of Edmonds and Fulkerson.
 \begin{Theorem}\label{Thm:theoremI(G)[nu(G)]Polymatroidal}
		Let $G$ be a finite simple graph. Then $I(G)^{[\nu(G)]}$ is polymatroidal.
	\end{Theorem}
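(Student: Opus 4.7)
The plan is to interpret the generators of $I(G)^{[\nu(G)]}$ via the vertex sets saturated by maximum matchings of $G$, and then verify the polymatroidal exchange property by an alternating path surgery inside a symmetric difference of two matchings. Concretely, $I(G)^{[\nu(G)]}$ is generated in degree $2\nu(G)$ by the squarefree monomials $u_M=\prod_{v\in V(M)}x_v$, where $M$ runs over the maximum matchings of $G$ and $V(M)\subset V(G)$ denotes the set of vertices saturated by $M$. Distinct maximum matchings that saturate the same vertex set produce the same generator, so $G(I(G)^{[\nu(G)]})$ is in bijection with the collection $\mathcal{B}=\{V(M):M\text{ is a maximum matching of }G\}$. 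Because all generators are squarefree, the exchange property reduces to the basis exchange axiom on $\mathcal{B}$: for $B_1,B_2\in\mathcal{B}$ and $i\in B_1\setminus B_2$, I need a $j\in B_2\setminus B_1$ with $(B_1\setminus\{i\})\cup\{j\}\in\mathcal{B}$.

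To produce such a $j$, I would fix maximum matchings $M,M'$ with $V(M)=B_1$ and $V(M')=B_2$, and decompose the symmetric difference $M\triangle M'$ into its vertex-disjoint alternating paths and even cycles. Since $i\in V(M)\setminus V(M')$ has $M$-degree $1$ and $M'$-degree $0$ in $M\triangle M'$, it is the endpoint of some alternating path $P$ whose first edge belongs to $M$.

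The main obstacle is showing that $P$ must end with an $M'$-edge, and this is where the maximality of $M'$ is essential: if $P$ ended with an $M$-edge, then $P$ would contain one more $M$-edge than $M'$-edge, and flipping $P$ inside $M'$ would yield a matching of size $|M'|+1$, contradicting $\nu(G)=|M'|$. Writing $j$ for the other endpoint of $P$, we then have $j\in V(M')\setminus V(M)$. Finally, the matching $M'':=M\triangle P$ has size $|M|=\nu(G)$ and saturates exactly the vertex set $(V(M)\setminus\{i\})\cup\{j\}$, since the interior vertices of $P$ are covered by both $M$ and $M'$. Consequently $u_{M''}=x_j(u_M/x_i)$ is a generator of $I(G)^{[\nu(G)]}$, giving the required exchange.
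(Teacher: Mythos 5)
Your proof is correct and is essentially the same argument as the paper's: both trace the alternating path in $M\triangle M'$ starting at the vertex $i$ covered by $M$ but not by $M'$, use the maximality of $\nu(G)$ to rule out an augmenting path (so the path must end with an $M'$-edge at some $j\in V(M')\setminus V(M)$), and swap along it. The only difference is presentational: you invoke the standard decomposition of the symmetric difference into alternating paths and even cycles, whereas the paper constructs the same path edge by edge.
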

    \begin{proof}
    	Set $k=\nu(G)$, and let $u,v\in G(I(G)^{[k]})$ and $i$ such that $\deg_{x_i}(u)>\deg_{x_i}(v)$. Our job is to find $j$ such that $\deg_{x_j}(u)<\deg_{x_j}(v)$ and $x_j(u/x_i)\in G(I(G)^{[k]})$.
    	
    	Since $\nu(G)=\nu(I(G))$, we have
    	$$
    	u={\bf x}_{e_1}\cdots{\bf x}_{e_k}\ \ \ \textup{and}\ \ \ v={\bf x}_{f_1}\cdots{\bf x}_{f_k},
    	$$
    	where $M_u=\{e_1,\dots,e_k\}$ and $M_v=\{f_1,\dots,f_k\}$ are $k$-matchings of $G$. Up to relabelling, we have $e_1=\{i,h\}$ for some $h\in[n]$. Since $\deg_{x_i}(u)>\deg_{x_i}(v)$ and $u$ and $v$ are squarefree, it follows that $i\notin V(M_v)$. Thus $h\in V(M_v)$, otherwise $\{e_1,f_1,\dots,f_k\}$ would be a $(k+1)$-matching of $G$, against the fact that $k=\nu(G)$. Thus, we may assume that $f_1=\{h,i_1\}$ for some vertex $i_1\ne h$.
    	
    	Suppose that $i_1\notin V(M_u)$. Then we have $\deg_{x_{i_1}}(u)<\deg_{x_{i_1}}(v)$ and
    	$$
    	x_{i_1}(u/x_i)=(x_{h}x_{i_1}){\bf x}_{e_2}\cdots {\bf x}_{e_k}\in G(I(G)^{[k]}).
    	$$
    	The exchange property holds in this case.
    	
    	Otherwise, if $i_1\in V(M_u)$, then we may assume that $e_2=\{i_1,j_1\}$ for some vertex $j_1\notin\{i,h\}$. Then, $j_1$ must be in $V(M_{v})$, otherwise $\{\{i,h\},\{i_1,j_1\},f_2,\dots,f_k\}$ would be a $(k+1)$-matching of $G$, which is absurd. Hence, we may assume that $f_2=\{j_1,i_2\}$ for some $i_2\notin\{i,h,i_1,j_1\}$. Now, we distinguish two more cases.
    	
    	Suppose that $i_2\notin V(M_u)$. Then we have $\deg_{x_{i_2}}(u)<\deg_{x_{i_2}}(v)$ and
    	$$
    	x_{i_2}(u/x_i)=(x_{h}x_{i_1})(x_{j_1}x_{i_2}){\bf x}_{e_3}\cdots {\bf x}_{e_k}\in G(I(G)^{[k]}).
    	$$
    	Thus, we are finished in this case.
    	
    	Otherwise, if $i_2\in V(M_u)$, then we may assume that $e_3=\{i_2,j_2\}$ for some vertex $j_2\notin\{i,h,i_1,j_1,i_2\}$. Arguing as before, we obtain that $j_2\in V(M_v)$, and we can assume that $f_3=\{j_2,i_3\}$ for some vertex $i_3\notin\{i,h,i_1,j_1,i_2\}$.\medskip
    	
    	Iterating this argument, we obtain at the $p$th step that
    	\begin{enumerate}
    		\item[(i)] $e_1=\{i,h\}$, $e_2=\{i_1,j_1\}$, $\dots$, $e_{p}=\{i_{p-1},j_{p-1}\}$ and
    		\item[(ii)] $f_1=\{h,i_1\}$, $f_2=\{j_1,i_2\}$, $\dots$, $f_p=\{j_{p-1},i_p\}$.
    	\end{enumerate}\smallskip
    
        Thus, if $i_p\notin V(M_u)$, then $\deg_{x_{i_p}}(u)<\deg_{x_{i_p}}(v)$ and
        $$
        x_{i_p}(u/x_i)={\bf x}_{f_1}\cdots {\bf x}_{f_p}{\bf x}_{e_{p+1}}\cdots{\bf x}_{e_k}\in G(I(G)^{[k]}).
        $$
        The exchange property holds in such a case.
        
        Otherwise, if $i_p\in V(M_u)$, then $e_{p+1}=\{i_p,j_p\}$ for some vertex $j_p$ different from all vertices $i,h,i_1,j_1,\dots,i_{p-1},j_{p-1},i_p$, and $f_{p+1}=\{j_p,i_{p+1}\}$ for some vertex $i_{p+1}$.\smallskip
        
        It is clear that the process described in (i)--(ii) terminates at most after $k$ steps. If we reach the $k$th step, then $\deg_{x_{i_k}}(u)<\deg_{x_{i_k}}(v)$ and
        $$
        x_{i_k}(u/x_i)={\bf x}_{f_1}\cdots {\bf x}_{f_k}=v\in G(I(G)^{[k]}).
        $$
        Thus, the exchange property holds in any case and $I(G)^{[k]}$ is polymatroidal.
    \end{proof}\medskip

\section{Linearly related matching powers}\label{sec:2-EreyFic}

In this section we summarize some of the main results from \cite{EF}.

Let $\mathcal{D}=(V(\mathcal{D}),E(\mathcal{D}),w)$ be a vertex-weighted oriented graph.
	
\begin{Remark}\label{remark: assumption on sources}
	\rm If $i\in V(G)$ is a \textit{source}, that is a vertex such that $(j,i)\notin E(\mathcal{D})$ for all $j$, then $\deg_{x_i}(I(\mathcal{D}))=1$. Therefore, without loss of generality, hereafter we assume that $w_i=1$ for all sources $i\in V(G)$.
\end{Remark}

The following result was shown in \cite[Lemma 2.4]{EF}.
\begin{Lemma}\label{lem:induced subgraph}
	Let $\mathcal{D}'$ be an induced weighted oriented subgraph of $\mathcal{D}$. Then 
	\begin{enumerate}
		\item[\textup{(a)}] $\beta_{i,{\bf a}}(I(\mathcal{D}')^{[k]}) \leq \beta_{i,{\bf a}}(I(\mathcal{D})^{[k]})$ for all $i$ and ${\bf a}\in \mathbb{Z}^n$.
		\item[\textup{(b)}] $\reg(I(\mathcal{D}')^{[k]}) \leq \reg(I(\mathcal{D})^{[k]})$.
	\end{enumerate}
\end{Lemma}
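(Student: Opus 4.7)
My plan rests on the observation that, since $\mathcal{D}'$ is an \emph{induced} weighted oriented subgraph with $V'=V(\mathcal{D}')$, the generators of $I(\mathcal{D}')^{[k]}$ are exactly those generators of $I(\mathcal{D})^{[k]}$ whose support lies in $V'$. Indeed, any generator of $I(\mathcal{D})^{[k]}$ has the form $\prod_{p=1}^{k}x_{i_p}x_{j_p}^{w_{j_p}}$ arising from a $k$-matching $\{e_1,\dots,e_k\}$ of the underlying graph; its support is contained in $V'$ iff every $e_p\subseteq V'$, in which case $\{e_1,\dots,e_k\}$ is already a $k$-matching of $\mathcal{D}'$ by the induced hypothesis, so the product is a generator of $I(\mathcal{D}')^{[k]}$. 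Writing $S'=K[x_i:i\in V']$, this gives the key identification
\[
I(\mathcal{D}')^{[k]}\;=\;I(\mathcal{D})^{[k]}\cap S'.
\]

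For part (a), I would then invoke the well-known locality of multigraded Betti numbers of monomial ideals: for any monomial ideal $I\subseteq S$, any subset $Y\subseteq[n]$, and any multidegree ${\bf a}\in\mathbb{Z}_{\geq 0}^n$ with $\supp({\bf a})\subseteq Y$, one has $\beta^{S_Y}_{i,{\bf a}}(I\cap S_Y)=\beta^{S}_{i,{\bf a}}(I)$, where $S_Y=K[x_i:i\in Y]$. This is a direct consequence of the upper Koszul simplicial complex description of multigraded Tor, namely $\beta_{i,{\bf a}}(S/I)=\dim_K\tilde{H}_{i-2}(K^{{\bf a}}(I);K)$, together with the fact that $K^{{\bf a}}(I)$ depends only on the generators of $I$ that divide $x^{{\bf a}}$; for $\supp({\bf a})\subseteq Y$ those are precisely the generators of $I\cap S_Y$. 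Specialising to $Y=V'$ and combining with the identification above, we obtain $\beta_{i,{\bf a}}(I(\mathcal{D}')^{[k]})=\beta_{i,{\bf a}}(I(\mathcal{D})^{[k]})$ when $\supp({\bf a})\subseteq V'$, while for multidegrees with $\supp({\bf a})\not\subseteq V'$ the left-hand side vanishes trivially because $I(\mathcal{D}')^{[k]}\subseteq S'$. In either case the inequality in (a) holds.

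Part (b) is an immediate corollary of (a): pick $(i,{\bf a})$ realising $\reg(I(\mathcal{D}')^{[k]})$, so that $|{\bf a}|-i=\reg(I(\mathcal{D}')^{[k]})$ and $\beta_{i,{\bf a}}(I(\mathcal{D}')^{[k]})\neq 0$; by (a) also $\beta_{i,{\bf a}}(I(\mathcal{D})^{[k]})\neq 0$, so $|{\bf a}|-i\leq\reg(I(\mathcal{D})^{[k]})$.

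The main technical point is the locality step, which is slightly subtle because the weights $w_i\geq 2$ prevent $I(\mathcal{D})^{[k]}$ from being squarefree. I would handle it either via the upper Koszul description just cited (valid for arbitrary monomial ideals) or, equivalently, via polarization: polarize $I(\mathcal{D})^{[k]}$ to a squarefree ideal $\widetilde{I}$, note that the polarization of $I(\mathcal{D}')^{[k]}$ is precisely the restriction of $\widetilde{I}$ to the polarized variables indexed by $V'$, and apply Hochster's formula to full subcomplexes of the associated Stanley--Reisner complex. Once locality is in hand, the rest is bookkeeping.
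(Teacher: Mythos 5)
Your argument is correct and is essentially the standard one: the paper itself does not reprove this lemma but cites \cite[Lemma 2.4]{EF}, where the proof rests on exactly your two ingredients, namely that $G(I(\mathcal{D}')^{[k]})$ consists of the generators of $I(\mathcal{D})^{[k]}$ supported on $V(\mathcal{D}')$ (using that $\mathcal{D}'$ is induced), combined with the restriction lemma for multigraded Betti numbers of monomial ideals. Your derivation of (b) from (a) via the Betti-number characterization of regularity is also the intended one.
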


The next result summarizes \cite[Lemma 3.7, Theorem 3.8, Theorem 3.10]{EF}.

\begin{Theorem}\label{Thm:Summarize}
	Let $\mathcal{D}$ be a weighted oriented graph with underlying graph $G$. Suppose that $I(\mathcal{D})\ne I(G)$ and that $G$ has no even cycles.
	\begin{enumerate}
		\item[\textup{(i)}] If $I(\mathcal{D})^{[k]}$ is linearly related, then:
		\begin{enumerate}
			\item[\textup{(a)}] For any $u\in G(I(\mathcal{D})^{[k]})$ and any $x$ variable such that $\deg_{x}(u)=r>1$, we have $\deg_{x}(v)=r$ for every $v\in G(I(\mathcal{D})^{[k]})$.
			\item[\textup{(b)}] $k=\nu(G)$.
		\end{enumerate}\smallskip
	    \item[\textup{(ii)}] The following conditions are equivalent.
	    \begin{enumerate}
	    	\item[\textup{(a)}] $I(\mathcal{D})^{[k]}$ is linearly related.
	    	\item[\textup{(b)}] $I(\mathcal{D})^{[k]}$ is polymatroidal.
	    	\item[\textup{(c)}] $I(\mathcal{D})^{[k]}$ has a linear resolution.
	    \end{enumerate}
	\end{enumerate}
\end{Theorem}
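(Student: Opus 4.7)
The plan is to prove (i)(a) by contraposition using the induced-subgraph monotonicity of Lemma \ref{lem:induced subgraph}, deduce (i)(b) as a short corollary of (i)(a), and settle (ii) by combining (i) with Theorem \ref{Thm:theoremI(G)[nu(G)]Polymatroidal}. For (i)(a), suppose the property fails: there exist $u,v\in G(I(\mathcal{D})^{[k]})$, coming from $k$-matchings $M_u,M_v$, and a variable $x_j$ with $\deg_{x_j}(u)=w_j>1$ while $\deg_{x_j}(v)<w_j$; thus $j$ is a target of its edge in $M_u$ but is either absent from $V(M_v)$ or a source in $M_v$. Restrict to the induced weighted oriented subgraph $\mathcal{D}'$ on $V(M_u)\cup V(M_v)\cup\{j\}$; the underlying graph of $\mathcal{D}'$ inherits the no-even-cycles hypothesis. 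In this smaller setting the symmetric difference $M_u\triangle M_v$ decomposes into alternating paths and odd alternating cycles, and one identifies a multidegree ${\bf a}$ witnessing $\beta_{1,{\bf a}}(I(\mathcal{D}')^{[k]})>0$ with $|{\bf a}|-\deg(u)\ge 2$. The combinatorial core is that in the absence of even cycles no sequence of alternating-path exchanges can reverse $j$'s target/source role while staying inside the generating set, so the obstruction cannot be resolved by linear syzygies. Lemma \ref{lem:induced subgraph}(a) then lifts this to $\beta_{1,{\bf a}}(I(\mathcal{D})^{[k]})>0$, contradicting linear relatedness.

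For (i)(b), suppose $I(\mathcal{D})^{[k]}$ is linearly related but $k<\nu(G)$. Since $I(\mathcal{D})\ne I(G)$, pick a vertex $j$ with $w_j>1$ and an edge $(i,j)\in E(\mathcal{D})$. Extend $\{(i,j)\}$ to a $(k+1)$-matching $M'$ of $G$. Removing $(i,j)$ from $M'$ yields a $k$-matching $M_v$ with $j\notin V(M_v)$, while removing any other edge of $M'$ yields a $k$-matching $M_u$ in which $j$ remains a target. The corresponding generators satisfy $\deg_{x_j}(u)=w_j>0=\deg_{x_j}(v)$, contradicting (i)(a). Hence $k\ge\nu(G)$, and since $k\le\nu(I(\mathcal{D}))=\nu(G)$ we conclude $k=\nu(G)$.

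For (ii), the implications (b)$\Rightarrow$(c)$\Rightarrow$(a) are standard: polymatroidal ideals have linear resolutions by Herzog--Hibi, and a linear resolution implies linearly related. For (a)$\Rightarrow$(b), apply (i): $k=\nu(G)$, and setting $T=\{j:\deg_{x_j}(u)=w_j>1\text{ for all }u\in G(I(\mathcal{D})^{[\nu(G)]})\}$, part (i)(a) forces $\deg_{x_j}(u)\le 1$ for every $u$ when $j\notin T$. Taking $m=\prod_{j\in T}x_j^{w_j-1}$, each generator factors as $u=m\cdot\prod_{j\in V(M_u)}x_j$, giving $I(\mathcal{D})^{[\nu(G)]}=m\cdot I(G)^{[\nu(G)]}$. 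Since $I(G)^{[\nu(G)]}$ is polymatroidal by Theorem \ref{Thm:theoremI(G)[nu(G)]Polymatroidal} and multiplication by a monomial preserves the exchange property, $I(\mathcal{D})^{[\nu(G)]}$ is polymatroidal. The principal obstacle throughout is the rigidity step in (i)(a): a careful combinatorial analysis of alternating walks in graphs without even cycles is needed to produce an explicit non-linear first Betti number, while the remaining parts follow by comparatively formal arguments.
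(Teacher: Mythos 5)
The paper does not actually prove Theorem \ref{Thm:Summarize}: it is stated as a summary of results imported from \cite{EF} (Lemma 3.7, Theorem 3.8, Theorem 3.10 there), so there is no in-paper argument to compare against, and your proposal must stand on its own. It does not, and the gap is exactly where you flag one: part (i)(a) is never proved. After restricting to the induced subgraph $\mathcal{D}'$ on $V(M_u)\cup V(M_v)\cup\{j\}$ you assert that ``one identifies a multidegree ${\bf a}$ witnessing $\beta_{1,{\bf a}}(I(\mathcal{D}')^{[k]})>0$ with $|{\bf a}|-\deg(u)\ge 2$'' and that, absent even cycles, no sequence of alternating exchanges can resolve the obstruction by linear syzygies; but no multidegree is exhibited, no nonzero homology class or minimal syzygy is produced, and the no-even-cycle hypothesis is never brought to bear in any checkable way. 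Since (i)(b) and the implication (ii)(a)$\Rightarrow$(ii)(b) are both derived from (i)(a), the whole theorem rests on this unproved assertion; your closing sentence concedes as much.

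The surrounding reductions are essentially sound and, in one place, genuinely nice: granting (i)(a), your factorization $I(\mathcal{D})^{[\nu(G)]}=m\cdot I(G)^{[\nu(G)]}$ with $m=\prod_{j\in T}x_j^{w_j-1}$ is correct, since each generator has $x_j$-degree in $\{0,1,w_j\}$ and (i)(a) forces the excess exponents to be constant across generators; polymatroidality then follows from Theorem \ref{Thm:theoremI(G)[nu(G)]Polymatroidal} together with invariance of polymatroidality under multiplication by a monomial. This cleanly reduces (a)$\Rightarrow$(b) to the Edmonds--Fulkerson theorem. One small repair is needed in (i)(b): an edge $\{i,j\}$ need not extend to a $(k+1)$-matching even when $k+1\le\nu(G)$ (in the path $P_4$ the middle edge lies in no $2$-matching), so you cannot extract both $M_u$ and $M_v$ from a single $M'$. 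Instead take $M_u$ to be any $k$-matching containing $\{i,j\}$, which exists because every edge lies in a matching of size at least $\nu(G)-1\ge k$, and separately obtain $M_v$ by deleting from some $(k+1)$-matching the edge meeting $j$, if any; then $\deg_{x_j}(u)=w_j>1$ while $\deg_{x_j}(v)=0$, and (i)(a) applies. With that adjustment (i)(b) and (ii) are fine, but until (i)(a) is actually established the proof is incomplete.
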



A forest is a graph which has no cycles. In particular, the above theorem applies to all weighted oriented graphs whose underlying graphs are forests.

\section{Forests whose last matching power is polymatroidal}\label{sec:3-EreyFic}

In this section, we combinatorially classify the weighted oriented forests $\mathcal{D}$ whose last matching power $I(\mathcal{D})^{[\nu(I(\mathcal{D}))]}$ is polymatroidal.

To state the classification, we recall some concepts. A \textit{leaf} $v$ of a graph $G$ is a vertex incident to only one edge. Any tree with at least one edge possesses at least two leaves. Let $a\in V(G)$ be a leaf and $b$ be the unique neighbor of $a$. Following \cite{EH2021}, we say that $a$ is a \textit{distant leaf} if at most one of the neighbors of $b$ is not a leaf. In this case, we say that $\{a,b\}$ is a \textit{distant edge}. It is proved in \cite[Proposition 9.1.1]{J2004} (see, also, \cite[Lemma 4.2]{EH2021} or \cite[Proposition 2.2]{CFL}) that any forest with at least one edge has a distant leaf.

We say that an edge $\{a,b\}$ of a graph $G$ is an \textit{isolated edge} if $a$ and $b$ are leaves. In this case $I(G)=I(G\setminus\{a,b\})+(ab)$.

Suppose that $G$ is a forest which has at least one non-isolated edge. Then, the above result \cite[Proposition 9.1.1]{J2004} implies that we can find vertices $a_1,\dots,a_t,b,c$, with $t\ge1$, such that $a_1,\dots,a_t$ are distant leaves and $\{a_1,b\},\dots,\{a_t,b\},\{b,c\}\in E(G)$. In this case we say that $(a_1,\dots,a_t\ |\ b,c)$ is a \textit{distant configuration} of the forest $G$. Figure \ref{fig:1} displays this situation. 	

 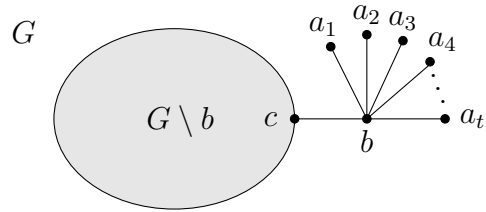
\begin{figure}[H]
		\begin{tikzpicture}[scale=0.8]
		\filldraw[fill=black!10!white] (0,0) ellipse (2cm and 1.5cm);
		\filldraw (2,0) circle (2pt) node[left,xshift=-2.5]{$c$};
		\filldraw (3.2,0) circle (2pt) node[below]{$b$};
		\filldraw (4.5,0) circle (2pt) node[right,xshift=1.5]{$a_t$};
		\filldraw (2.6,1.2) circle (2pt) node[above,xshift=-2.8]{$a_1$};
		\filldraw (3.2,1.4) circle (2pt) node[above]{$a_2$};
		\filldraw (3.8,1.3) circle (2pt) node[above]{$a_3$};
            \filldraw (4.4,-0.05) node[above]{\rotatebox[origin=c]{290}{$\dots$}};
		\filldraw (4.25,0.95) circle (2pt) node[above,xshift=4.8]{$a_4$};
		\draw[-] (3.2,0)--(2,0);
		\draw[-] (4.5,0)--(3.2,0);
		\draw[-] (3.2,0)--(2.6,1.2);
		\draw[-] (3.2,0)--(3.2,1.4);
		\draw[-] (3.2,0)--(3.8,1.3);
		\draw[-] (3.2,0)--(4.3,0.95);
		\filldraw (-2.5,1.8) node[below]{$G$};
		\filldraw (0.1,0.4) node[below]{$G\setminus b$};
		\end{tikzpicture}
		\vspace{4mm}\caption{A forest $G$ with distant configuration $(a_1,\dots,a_t\ |\ b,c)$.}
            \label{fig:1}
	\end{figure}
	
Let $\mathcal{D}$ be a weighted oriented graph with underlying graph $G$. If $W\subset V(\mathcal{D})$, we denote by $\mathcal{D}\setminus W$ the induced weighted oriented subgraph of $\mathcal{D}$ on the vertex set $V(\mathcal{D})\setminus W$. For any edge $\{a,b\}\in E(G)$, we set
$$
{\bf x}_{\{a,b\}}^{(\mathcal{D})}\ =\ \begin{cases}
    x_ax_b^{w(b)}&\textit{if}\ (a,b)\in E(\mathcal{D}),\\
    x_bx_a^{w(a)}&\textit{if}\ (b,a)\in E(\mathcal{D}).
\end{cases}
$$

We say that $\{a,b\}\in E(G)$ is a \textit{strong edge} if $\{a,b\}$ belongs to all matchings of $G$ having maximal size $\nu(G)$. In such a case, $I(\mathcal{D})^{[\nu(G)]}={\bf x}_{\{a,b\}}^{(\mathcal{D})}I(\mathcal{D}\setminus\{a,b\})^{[\nu(G)-1]}$. It is clear that an isolated edge is a strong edge.

\begin{Lemma}\label{Lemma:StrongNursel}
    Let $G$ be a forest with distant configuration $(a_1,\dots,a_t\ |\ b,c)$ and with $\nu(G)\ge2$. Then $\{a_i,b\}$ is a strong edge of $G$, for some $i$, if and only if, $t=1$ and $c\in V(M)$ for all $(\nu(G)-1)$-matchings $M$ of $G\setminus\{b\}$.
\end{Lemma}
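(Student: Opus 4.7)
The plan is to split the biconditional into two implications and handle the cardinality $t$ separately from the matching-theoretic condition on $c$.

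First, I would dispense with $t \ge 2$ on the forward direction. If some $\{a_i,b\}$ is strong, then because strongness requires $\{a_i,b\}$ to lie in at least one maximum matching $M$ (the set of maximum matchings is non-empty), I would pick such an $M$ and exploit the fact that each $a_j$ is a leaf whose only neighbor is $b$. So if $t\ge 2$, then for $j\ne i$ the vertex $a_j$ is unused in $M\setminus\{\{a_i,b\}\}$, and swapping $\{a_i,b\}$ for $\{a_j,b\}$ produces another maximum matching avoiding $\{a_i,b\}$, contradicting strongness. This forces $t=1$; henceforth write $a=a_1$.

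For the rest of the forward direction, assuming $\{a,b\}$ is strong, I would show that every $(\nu(G)-1)$-matching $M$ of $G\setminus b$ contains $c$ by contradiction. If some such $M$ avoids $c$, then $\{b,c\}$ is vertex-disjoint from $M$, so $M\cup\{\{b,c\}\}$ is a matching of $G$ of size $\nu(G)$ which does not contain $\{a,b\}$, contradicting the strongness hypothesis.

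For the reverse direction, assume $t=1$ and that every $(\nu(G)-1)$-matching of $G\setminus b$ meets $c$. Suppose for contradiction that $\{a,b\}$ is not strong and pick a maximum matching $M^*$ with $\{a,b\}\notin M^*$. Since $a$ is a leaf incident only to $b$, vertex $a$ cannot appear in $M^*$. I would then split according to whether $b$ is covered by $M^*$. If $b\in V(M^*)$, then the only possible edge is $\{b,c\}$, and removing it yields a $(\nu(G)-1)$-matching of $G\setminus b$ that misses $c$. If $b\notin V(M^*)$, then $M^*$ lives entirely in $G\setminus b$ and has size $\nu(G)$; deleting an edge incident to $c$ (or any edge, if $c\notin V(M^*)$) produces a $(\nu(G)-1)$-matching of $G\setminus b$ missing $c$. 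Either possibility contradicts the hypothesis, finishing the proof.

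The main place to be careful is the interaction between the strongness definition and existence: one must not argue from a maximum matching containing $\{a,b\}$ unless strongness has already guaranteed one. Apart from that, the argument is essentially a bookkeeping exercise with vertex-disjointness, and the leaf structure around $b$ makes the necessary edge swaps automatic.
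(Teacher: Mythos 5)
Your proof is correct and follows essentially the same route as the paper: the reduction to $t=1$ and the treatment of $c$ in the forward direction are identical, and your reverse direction is just the contrapositive form of the paper's observation that every maximum matching must contain either $\{a,b\}$ or $\{b,c\}$. (Your subcase $b\notin V(M^*)$ is in fact vacuous, since there $M^*\cup\{\{a,b\}\}$ would be a matching of size $\nu(G)+1$, but the contradiction you derive in that subcase is valid anyway.)
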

\begin{proof}
    Suppose that $\{a_i,b\}$ is a strong edge for some $i$. Then $t=1$. Indeed, let $M$ be a matching of $G$ of size $\nu(G)$. Then $\{a_i,b\}\in M$. But, if $t>1$ then for some $j\ne i$, $(M\setminus\{\{a_i,b\}\})\cup\{\{a_j,b\}\}$ would also be a matching of $G$ of maximal size not containing $\{a_i,b\}$, which is absurd. Thus $t=1$. Now, suppose that there exists a $(\nu(G)-1)$-matching $M$ of $G\setminus b$ with $c\notin V(M)$. Then $M\cup\{\{b,c\}\}$ would be a maximum matching of $G$ not containing $\{a_i,b\}$, which is absurd.

    Conversely, assume that $(a\ |\ b,c)$ is a distant configuration of $G$ and $c\in V(M)$, for all $(\nu(G)-1)$-matchings of $G\setminus b$. Note that every matching $N$ of $G$ of size $\nu(G)$ contains either $\{b,c\}$ or $\{a,b\}$. But if $N$ contains $\{b,c\}$, then $N\setminus\{\{b,c\}\}$ would be a $(\nu(G)-1)$-matching of $G\setminus b$ whose vertex set does not contain $c$, against our assumption. The conclusion follows. 
\end{proof}

Let  $\mathcal{D}$ be a weighted oriented graph whose underlying graph $G$ is a forest. If $I(\mathcal{D})=I(G)$, then the problem of characterizing those matching powers which have linear resolution was solved in \cite[Theorem 41]{EH2021}. If $I(\mathcal{D})\ne I(G)$ and $I(\mathcal{D})^{[k]}$ has linear resolution, Theorem \ref{Thm:Summarize}(i)(b) implies that $k=\nu(G)$. Hence in the following theorem, we consider right away only the matching power $I(\mathcal{D})^{[\nu(G)]}$.

\begin{Theorem}\label{Thm:ForestPolym}
    Let $\mathcal{D}$ be a weighted oriented graph whose underlying graph $G$ is a forest, with $\nu(G)\ge2$. Suppose that $I(\mathcal{D})\ne I(G)$. Then, the following conditions are equivalent.
    \begin{enumerate}
        \item[\textup{(a)}] $I(\mathcal{D})^{[\nu(G)]}$ is linearly related.
        \item[\textup{(b)}] $I(\mathcal{D})^{[\nu(G)]}$ is polymatroidal.
        \item[\textup{(c)}] $I(\mathcal{D})^{[\nu(G)]}$ has a linear resolution.
        \item[\textup{(d)}] One of the following conditions holds:
    \end{enumerate}
    \begin{enumerate}
        \item[\textup{(d-1)}] $G$ has an isolated edge $\{a,b\}$ such that $I(\mathcal{D}\setminus\{a,b\})^{[\nu(G)-1]}$ is polymatroidal, and $$I(\mathcal{D})^{[\nu(G)]}={\bf x}_{\{a,b\}}^{(\mathcal{D})}I(\mathcal{D}\setminus\{a,b\})^{[\nu(G)-1]}.$$
        \item[\textup{(d-2)}] $G$ has a distant configuration $(a\ |\ b,c)$ with $\{a,b\}\in E(G)$ a strong edge of $G$, $I(\mathcal{D}\setminus\{a,b\})^{[\nu(G)-1]}$ is polymatroidal, and $$I(\mathcal{D})^{[\nu(G)]}={\bf x}_{\{a,b\}}^{(\mathcal{D})}I(\mathcal{D}\setminus\{a,b\})^{[\nu(G)-1]}.$$
        \item[\textup{(d-3)}] $G$ has a distant configuration $(a_1,\dots,a_t\ |\ b,c)$, $w(a_1)=\dots=w(a_t)=1$, and $I(\mathcal{D}\setminus\{b\})^{[\nu(G)-1]}$ is polymatroidal. Moreover the following statements hold.
        \begin{itemize}
            \item[\textup{(d-3-i)}]  If $I(\mathcal{D}\setminus\{b,c\})^{[\nu(G)-1]}=0$, then ${\bf x}_{\{a_i,b\}}^{(\mathcal{D})}=x_{a_i}x_b^{\delta}$ with $\delta\in\{1,w(b)\}$ for all $i$, and
        \begin{equation}\label{equation d-3-i}
        I(\mathcal{D})^{[\nu(G)]}=x_{b}^{\delta}[(x_{a_1},\dots,x_{a_t})I(\mathcal{D}\setminus\{b\})^{[\nu(G)-1]}].
        \end{equation}
            \item[\textup{(d-3-ii)}] Otherwise, $I(\mathcal{D}\setminus\{b,c\})^{[\nu(G)-1]}\ne0$ is polymatroidal, $\delta=w(b)$, $w(c)=1$ and 
            \begin{equation}\label{equation d-3-ii}
            I(\mathcal{D})^{[\nu(G)]}=x_{b}^{w(b)}[(x_{a_1},\dots,x_{a_t})I(\mathcal{D}\setminus\{b\})^{[\nu(G)-1]}+x_cI(\mathcal{D}\setminus\{b,c\})^{[\nu(G)-1]}].
            \end{equation}
        \end{itemize}   
    \end{enumerate}
\end{Theorem}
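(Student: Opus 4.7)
The equivalence (a)$\Leftrightarrow$(b)$\Leftrightarrow$(c) is immediate from Theorem \ref{Thm:Summarize}(ii), since the forest $G$ has no even cycles and we assume $I(\mathcal{D})\ne I(G)$. Hence the real content is the equivalence (b)$\Leftrightarrow$(d), which I would prove by induction on $|V(G)|$, using repeatedly the existence of a distant leaf (and hence either an isolated edge or a distant configuration) in any forest with at least one edge.

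For the implication (d)$\Rightarrow$(b): In cases (d-1) and (d-2), $I(\mathcal{D})^{[\nu(G)]}$ is a monomial times a polymatroidal ideal, so the exchange property lifts immediately from the smaller factor. For (d-3-i) the generators are $x_b^{\delta}x_{a_i}w$ with $i\in[t]$ and $w\in G(I(\mathcal{D}\setminus\{b\})^{[\nu(G)-1]})$; since the leaves $a_1,\dots,a_t$ are disjoint in support from any such $w$, verifying the exchange property reduces either to a trivial swap between the $x_{a_i}$'s or to the exchange property of the inner polymatroidal factor. For (d-3-ii) one additionally has generators $x_b^{w(b)}x_c w'$ with $w'\in G(I(\mathcal{D}\setminus\{b,c\})^{[\nu(G)-1]})$, and a slightly more elaborate case analysis on pairs of generators of possibly different types establishes the exchange property, using $w(c)=1$ and $\delta=w(b)$ to keep the degrees in $x_b$ and $x_c$ compatible across the two summands.

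For (b)$\Rightarrow$(d): pick a distant leaf $a$ of $G$ with unique neighbor $b$. If $\{a,b\}$ is an isolated edge, every maximum matching of $G$ contains it, so $I(\mathcal{D})^{[\nu(G)]}={\bf x}_{\{a,b\}}^{(\mathcal{D})}\cdot I(\mathcal{D}\setminus\{a,b\})^{[\nu(G)-1]}$, and dividing out the common monomial factor shows the smaller ideal is polymatroidal too, yielding (d-1). Otherwise $a$ lies in a distant configuration $(a_1,\dots,a_t\mid b,c)$; if some $\{a_i,b\}$ is strong, Lemma \ref{Lemma:StrongNursel} forces $t=1$ and the same monomial-factoring argument produces (d-2).

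The remaining and delicate case is when no $\{a_i,b\}$ is strong, and it is here that (d-3) is forced. For each $i\in[t]$ there is a maximum matching $M$ with $\{a_i,b\}\notin M$; since $a_i$ is a leaf and $b\in V(M)$ (otherwise $M\cup\{\{a_i,b\}\}$ would be larger), the vertex $a_i$ is uncovered by $M$, so the corresponding generator of $I(\mathcal{D})^{[\nu(G)]}$ has $\deg_{x_{a_i}}=0$. If one had $(b,a_i)\in E(\mathcal{D})$ with $w(a_i)>1$, the generator from a maximum matching through $\{a_i,b\}$ would satisfy $\deg_{x_{a_i}}=w(a_i)>1$, contradicting Theorem \ref{Thm:Summarize}(i)(a); hence $w(a_i)=1$ and ${\bf x}_{\{a_i,b\}}^{(\mathcal{D})}=x_{a_i}x_b^{\delta_i}$ with $\delta_i\in\{1,w(b)\}$. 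The same rigidity applied to $x_b$ forces a common value $\delta$ of the $\delta_i$'s whenever some $\delta_i>1$, and, when $\{b,c\}$ appears in some maximum matching, further forces $\delta=w(b)$ and $w(c)=1$. Splitting on whether $I(\mathcal{D}\setminus\{b,c\})^{[\nu(G)-1]}$ vanishes separates (d-3-i) from (d-3-ii). The polymatroidality of the smaller ideals $I(\mathcal{D}\setminus\{b\})^{[\nu(G)-1]}$ and $I(\mathcal{D}\setminus\{b,c\})^{[\nu(G)-1]}$ is extracted from that of $I(\mathcal{D})^{[\nu(G)]}$ by standard colon-ideal manipulations combined with the inductive hypothesis. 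The main obstacle I anticipate is precisely this last step in (d-3-ii), where the two summands in (\ref{equation d-3-ii}) must be disentangled and shown to each satisfy the exchange property compatibly with the mixed generators.
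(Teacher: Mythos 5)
Your overall architecture coincides with the paper's: the equivalence of (a), (b), (c) via Theorem \ref{Thm:Summarize}(ii); the case split according to isolated edge, strong distant edge, and non-strong distant configuration; the degree-rigidity arguments from Theorem \ref{Thm:Summarize}(i)(a) forcing $w(a_i)=1$, a common $\delta$, and (when $I(\mathcal{D}\setminus\{b,c\})^{[\nu(G)-1]}\ne0$) $\delta=w(b)$ and $\deg_{x_c}({\bf x}_{\{b,c\}}^{(\mathcal{D})})=1$; and, in the direction (b)$\Rightarrow$(d), the extraction of polymatroidality of the smaller ideals by colon manipulations (in the paper this is \cite[Theorem 1.1]{BH2013} combined with the restriction Lemma \ref{lem:induced subgraph}). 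Cases (d-1), (d-2) and (d-3-i) of the converse are also handled as in the paper.

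There is, however, a genuine gap in the step (d-3-ii)$\Rightarrow$(b). You assert that ``a slightly more elaborate case analysis on pairs of generators of possibly different types establishes the exchange property,'' but this is precisely the step that cannot be dispatched by inspection. Given $u=x_b^{w(b)}x_{a_i}v$ with $v\in G(I(\mathcal{D}\setminus\{b\})^{[\nu(G)-1]})$ and $u'=x_b^{w(b)}x_c v'$ with $v'\in G(I(\mathcal{D}\setminus\{b,c\})^{[\nu(G)-1]})$, an exchange at the index $a_i$ (where $\deg_{x_{a_i}}(u)=1>0=\deg_{x_{a_i}}(u')$) requires either that $x_c\nmid v$ and $v$ already lies in $I(\mathcal{D}\setminus\{b,c\})^{[\nu(G)-1]}$, or an exchange \emph{between} the two distinct ideals $I(\mathcal{D}\setminus\{b\})^{[\nu(G)-1]}$ and $I(\mathcal{D}\setminus\{b,c\})^{[\nu(G)-1]}$; neither follows from the polymatroidality of each ideal separately, and you give no argument. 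The paper sidesteps this entirely: it shows that \eqref{equation d-3-ii} is a Betti splitting via \cite[Corollary 2.4]{FHT2009}, after computing the intersection of the two summands to be $x_c(x_{a_1},\dots,x_{a_t})I(\mathcal{D}\setminus\{b,c\})^{[\nu(G)-1]}$ (which has a linear resolution), deduces from \cite[Proposition 1.8]{CFts1} that $I(\mathcal{D})^{[\nu(G)]}$ has a linear resolution, and only then upgrades to polymatroidality using the equivalence (c)$\iff$(b) of Theorem \ref{Thm:Summarize}(ii). That equivalence is exactly the tool allowing one to prove ``linear resolution'' instead of verifying the exchange property directly, and your proposal never exploits it in this direction. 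Either supply the missing mixed-generator exchange argument or adopt the Betti splitting route. A minor further remark: the induction on $|V(G)|$ you announce is not needed for the statement as formulated, since condition (d) already refers to polymatroidality of the smaller ideals rather than to a fully recursive combinatorial description.
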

\begin{proof}
    From Theorem \ref{Thm:Summarize}(ii) and Theorem \ref{Thm:Summarize}(i)(b) it follows that (a) $\iff$ (b) $\iff$ (c). To conclude the proof, we show that (b) $\iff$ (d).\smallskip

    Firstly, we show that (b) $\implies$ (d). Suppose that $I(\mathcal{D})^{[\nu(G)]}$ is polymatroidal. If $G$ has an isolated edge, then the statement (d-1) holds. Let us assume that $G$ has no isolated edge. Then $G$ contains a distant configuration $(a_1,\dots,a_t\ |\ b,c)$.
    
    Suppose that $\{a_i,b\}$ is a strong edge for some $i$. Then, Lemma \ref{Lemma:StrongNursel} implies $t=1$. Since $I(\mathcal{D})^{[\nu(G)]}$ is polymatroidal if and only if $I(\mathcal{D}\setminus\{a,b\})^{[\nu(G)-1]}$ is polymatroidal, (d-2) follows.\smallskip

    
    Suppose that $\{a_i,b\}$ is not a strong edge for all $i$. Every matching of $G$ of size $\nu(G)$ contains either $\{b,c\}$ or $\{a_i,b\}$ for some $i=1,\dots ,t$. Therefore
    \begin{equation}\label{eq:I(D)Decomp}
       I(\mathcal{D})^{[\nu(G)]}= \ \sum_{i=1}^t{\bf x}_{\{a_i,b\}}^{(\mathcal{D})}I(\mathcal{D}\setminus\{b\})^{[\nu(G)-1]}
        +\ {\bf x}_{\{b,c\}}^{(\mathcal{D})}I(\mathcal{D}\setminus\{b,c\})^{[\nu(G)-1]}.
    \end{equation}

    We claim that
    \begin{enumerate}
        \item[\textup{(i)}] $w(a_i)=1$ for all $i=1,\dots, t$ and 
        \item[\textup{(ii)}] there exists $\delta\in\{1,w(b)\}$ such that ${\bf x}_{ \{a_i,b\}}^{(\mathcal{D})}=x_{a_i}x_b^\delta$ for all $i=1,\dots ,t$,
        \item[\textup{(iii)}] if $I(\mathcal{D}\setminus\{b,c\})^{[\nu(G)-1]}\ne0$ then ${\bf x}_{\{b,c\}}^{(\mathcal{D})}=x_cx_b^{w(b)}$ and $\delta=w(b)$.
    \end{enumerate}
    Once we have proved these facts, if $I(\mathcal{D}\setminus\{b,c\})^{[\nu(G)-1]}\ne0$, equation (\ref{eq:I(D)Decomp}) combined with (i), (ii) and (iii) implies that
    $$
        I(\mathcal{D})^{[\nu(G)]}=x_{b}^{w(b)}\big[(x_{a_1},\dots,x_{a_t})I(\mathcal{D}\setminus\{b\})^{[\nu(G)-1]}+x_cI(\mathcal{D}\setminus\{b,c\})^{[\nu(G)-1]}\big].
    $$

    Since $I(\mathcal{D})^{[\nu(G)]}$ is polymatroidal by assumption, by Lemma \ref{lem:induced subgraph} applied to the graph $\mathcal{D}\setminus\{a_1,\dots,a_t\}$, it follows that $x_cI(\mathcal{D}\setminus\{b,c\})^{[\nu(G)-1]}$ has a linear resolution. By applying \cite[Theorem 1.1]{BH2013}, we obtain that
    \begin{align*}
        (I(\mathcal{D})^{[\nu(G)]}:x_{a_1}\cdots x_{a_t})\ &=\ x_{b}^{w(b)}\big[I(\mathcal{D}\setminus\{b\})^{[\nu(G)-1]}+x_cI(\mathcal{D}\setminus\{b,c\})^{[\nu(G)-1]}\big]\\
        &=\ x_b^{w(b)}I(\mathcal{D}\setminus\{b\})^{[\nu(G)-1]}
    \end{align*}
    has a linear resolution. Now, Theorem \ref{Thm:Summarize}(ii) implies that both $I(\mathcal{D}\setminus\{b,c\})^{[\nu(G)-1]}$ and $I(\mathcal{D}\setminus\{b\})^{[\nu(G)-1]}$ are polymatroidal, and so (d-3-ii) follows.

    Otherwise, if $I(\mathcal{D}\setminus\{b,c\})^{[\nu(G)-1]}=0$, then equation (\ref{eq:I(D)Decomp}) combined with (i) and (ii) implies that
    $$
        I(\mathcal{D})^{[\nu(G)]}=x_{b}^{\delta}[(x_{a_1},\dots,x_{a_t})I(\mathcal{D}\setminus\{b,c\})^{[\nu(G)-1]}]
    $$
    By a similar argument as before, $I(\mathcal{D}\setminus\{b,c\})^{[\nu(G)-1]}$ has a linear resolution. Then, Theorem \ref{Thm:Summarize}(ii) implies that it is polymatroidal and thus (d-3-i) follows.\smallskip
    
    Next, we prove (i), (ii) and (iii).\medskip

    \textit{Proof of} (i): By Remark~\ref{remark: assumption on sources} if $a_i$ is a source, then we assume $w(a_i)=1$. Assume for a contradiction that $(b, a_i)\in E(\mathcal{D})$ but $w(a_i)>1$ for some $i$. Since $\{b,c\}$ is not a strong edge, equation (\ref{eq:I(D)Decomp}) implies that we can find a generator $u$ of $I(\mathcal{D})^{[\nu(G)]}$ with $\deg_{x_{a_i}}(u)=w(a_i)>1$. Theorem \ref{Thm:Summarize}(i)(a) implies that all generators of $I(\mathcal{D})^{[\nu(G)]}$ must have $x_{a_i}$-degree equal to $w(a_i)$. Then this implies that $\{b, a_i\}$ is a strong edge which is against our assumption. So, $w(a_i)=1$ for all $i$.\hfill$\square$\medskip

    \textit{Proof of} (ii): Lemma \ref{Thm:Summarize}(i)(a) and definition of $I(\mathcal{D})$  implies that there exists a $\delta\in\{1,w(b)\}$ such that ${\bf x}_{\{a_i,b\}}^{(\mathcal{D})}=x_{a_i}x_b^{\delta}$ for all $i=1,\dots,t$.\hfill$\square$\medskip
    
    \textit{Proof of} (iii): Suppose that $I(\mathcal{D}\setminus\{b,c\})^{[\nu(G)-1]}$ is non-zero. Let $u$ be a minimal monomial generator of $I(\mathcal{D}\setminus\{b,c\})^{[\nu(G)-1]}$.  Then ${\bf x}_{ \{a_i,b\}}^{(\mathcal{D})}u$ is a minimal monomial generator of $I(\mathcal{D})^{[\nu(G)]}$ whose $x_c$-degree is zero. Theorem \ref{Thm:Summarize}(i)(a), the assumption that $I(\mathcal{D}\setminus\{b,c\})^{[\nu(G)-1]}$ is non-zero and equation (\ref{eq:I(D)Decomp}), imply that $\deg_{x_c}({\bf x}_{\{b,c\}}^{(\mathcal{D})})=1$. Next, we claim that $\delta=w(b)$. If $b$ is a source then $w(b)=1$ and there is nothing to prove. If $w(b)=1$, there is also nothing to prove. Suppose that $b$ is not a source and $w(b)>1$, then there is a vertex $d\in\{a_1,\dots,a_t,c\}$ with $(d,b)\in E(G)$ and ${\bf x}_{\{b,d\}}^{(\mathcal{D})}=x_dx_b^{w(b)}$. Equation (\ref{eq:I(D)Decomp}) then implies the existence of a generator of $I(\mathcal{D})^{[\nu(G)]}$ whose $x_b$-degree is $w(b)>1$. Theorem \ref{Thm:Summarize}(i)(a) implies that all generators of $I(\mathcal{D})^{[\nu(G)]}$ have $x_b$-degree equal to $w(b)$. Hence $\delta=w(b)$ and ${\bf x}_{\{b,c\}}^{(\mathcal{D})}=x_cx_b^{w(b)}$.\hfill$\square$\medskip
    

    We now prove that (d) $\implies$ (b). If (d-1) or (d-2) holds then (b) follows from the following fact. If $I$ is a polymatroidal ideal and $u\in S$ is a monomial, then $uI$ is again polymatroidal. Suppose that (d-3) holds. If $I(\mathcal{D}\setminus\{b,c\})^{[\nu(G)-1]}=0$, then by equation \eqref{equation d-3-i}, the ideal $I(\mathcal{D})^{[\nu(G)]}$ is a product of polymatroidal ideals. Therefore it is polymatroidal as well by \cite[Theorem 12.6.3]{HHBook2011}.
        
        Now, suppose that (d-3-ii) holds. Then $I(\mathcal{D}\setminus\{b\})^{[\nu(G)-1]}$ and $x_cI(\mathcal{D}\setminus\{b,c\})^{[\nu(G)-1]}$ are polymatroidal ideals. Hence, $(x_{a_1},\dots,x_{a_t})I(\mathcal{D}\setminus\{b\})^{[\nu(G)-1]}$ has a linear resolution, as it is the product of monomial ideals with linear resolution in pairwise disjoint sets of variables. Therefore \cite[Corollary 2.4]{FHT2009} implies that \eqref{equation d-3-ii} is a Betti splitting. Now, since
    $$
    x_cI(\mathcal{D}\setminus\{b,c\})^{[\nu(G)-1]}\subset I(\mathcal{D}\setminus\{b,c\})^{[\nu(G)-1]}\subset I(\mathcal{D}\setminus\{b\})^{[\nu(G)-1]}
    $$
    and $x_{a_i}$ do not divide any generator of $x_cI(\mathcal{D}\setminus\{b,c\})^{[\nu(G)-1]}$ and $I(\mathcal{D}\setminus\{b\})^{[\nu(G)-1]}$, for all $1\le i\le t$, we obtain that
    \begin{center}
        $(x_{a_1},\dots,x_{a_t})I(\mathcal{D}\setminus\{b\})^{[\nu(G)-1]}\cap x_cI(\mathcal{D}\setminus\{b,c\})^{[\nu(G)-1]}=$\\[0.2cm]
        $=x_c(x_{a_1},\dots,x_{a_t})I(\mathcal{D}\setminus\{b,c\})^{[\nu(G)-1]}$
    \end{center}
    and this ideal has a linear resolution. Thus \cite[Proposition 1.8]{CFts1} implies that $I(\mathcal{D})^{[\nu(G)]}$ has a linear resolution. By Theorem \ref{Thm:Summarize}(ii) it follows that $I(\mathcal{D})^{[\nu(G)]}$ is polymatroidal and (b) follows.
\end{proof}\bigskip

Inspecting the proof of Theorem \ref{Thm:ForestPolym}, we see how to construct, recursively, all weighted oriented forests $\mathcal{D}$, with a given matching number, whose last matching power $I(\mathcal{D})^{[\nu(I(\mathcal{D}))]}$ is polymatroidal. Indeed, suppose that we have constructed all weighted oriented forests $\mathcal{D}$ with $\nu(I(\mathcal{D}))=k$ and $I(\mathcal{D})^{[k]}$ polymatroidal, then, according to the three possible cases (d-1), (d-2), (d-3), we can construct all weighted oriented forests $\mathcal{H}$ with $I(\mathcal{H})^{[\nu(I(\mathcal{H}))]}$ polymatroidal and with matching number $k+1$, one bigger than the previous fixed matching number.\bigskip

Let $\mathcal{D}$ be a weighted oriented graph, whose underlying graph $G$ is a forest, such that $I(\mathcal{D})\ne I(G)$. We illustrate the above procedure.\medskip

If $\nu(G)=1$, then $G$ is a star graph, with, say, $V(G)=[m]$ and $E(G)=\{\{i,m\}:1\le i\le m-1\}$. If $I(\mathcal{D})^{[1]}=I(\mathcal{D})$ is polymatroidal, then $w_1=\dots=w_{m-1}=1$ by Theorem \ref{Thm:Summarize}(i)(a). Since $I(\mathcal{D})\ne I(G)$, then $w_m>1$ and $E(\mathcal{D})=\{(i,m):1\le i\le n-1\}$. Thus, $I(\mathcal{D})=(x_1x_m^{w_m},x_2x_m^{w_m},\dots,x_{m-1}x_m^{w_m})=x_m^{w_m}(x_1,\dots,x_{m-1})$ is polymatroidal, for it is the product of polymatroidal ideals. In this case,
 \begin{figure}[H]
 \begin{tikzpicture}
        \tikzcdset{arrow style=tikz}
	\filldraw (0,0) circle (2pt);
        \filldraw (0,1.7) circle (2pt);
        \filldraw (-0.6,1.5) circle (2pt);
        \filldraw (-1,1.1) circle (2pt);
        \filldraw (1,1.1) circle (2pt);
        \draw (0,1.7) -- (0,0) node[sloped,pos=0.5,xscale=2,yscale=2]{\arrowIn};
        \draw (-0.6,1.5) -- (0,0) node[sloped,pos=0.5,xscale=2,yscale=2]{\arrowIn};
        \draw (-1,1.1) -- (0,0) node[sloped,pos=0.5,xscale=2,yscale=2]{\arrowIn};
        \draw (1,1.1) -- (0,0) node[sloped,pos=0.5,xscale=2,yscale=2]{\arrowOut};
        \filldraw (0.5,1) node[above]{\rotatebox[origin=c]{330}{$\dots$}};
\end{tikzpicture}
\end{figure}\medskip

Now, let $\nu(G)=2$, and suppose that $I(\mathcal{D})^{[2]}$ is polymatroidal. By Theorem \ref{Thm:ForestPolym}, only one of the possibilities (d-1), (d-2), (d-3) occurs. Exploiting these three possibilities, one can see that the only weighted oriented forests $\mathcal{D}$ such that $I(\mathcal{D})^{[2]}$ is polymatroidal, are the following ones:

 \begin{figure}[H]
 \begin{tikzpicture}[scale=0.8]
        \tikzcdset{arrow style=tikz}
	\filldraw (0,0) circle (2pt);
        \filldraw (0,1.7) circle (2pt);
        \filldraw (-0.6,1.5) circle (2pt);
        \filldraw (-1,1.1) circle (2pt);
        \filldraw (1,1.1) circle (2pt);
        \draw (0,1.7) -- (0,0) node[sloped,pos=0.5,xscale=2,yscale=2]{\arrowIn};
        \draw (-0.6,1.5) -- (0,0) node[sloped,pos=0.5,xscale=2,yscale=2]{\arrowIn};
        \draw (-1,1.1) -- (0,0) node[sloped,pos=0.5,xscale=2,yscale=2]{\arrowIn};
        \draw (1,1.1) -- (0,0) node[sloped,pos=0.5,xscale=2,yscale=2]{\arrowOut};
        \filldraw (0.5,1) node[above]{\rotatebox[origin=c]{330}{$\dots$}};
	\filldraw (2.6,0) circle (2pt);
        \filldraw (2.6,1.7) circle (2pt);
        \filldraw (2,1.5) circle (2pt);
        \filldraw (1.6,1.1) circle (2pt);
        \filldraw (3.6,1.1) circle (2pt);
        \draw (2.6,1.7) -- (2.6,0) node[sloped,pos=0.5,xscale=2,yscale=2]{\arrowIn};
        \draw (2,1.5) -- (2.6,0) node[sloped,pos=0.5,xscale=2,yscale=2]{\arrowIn};
        \draw (1.6,1.1) -- (2.6,0) node[sloped,pos=0.5,xscale=2,yscale=2]{\arrowIn};
        \draw (3.6,1.1) -- (2.6,0) node[sloped,pos=0.5,xscale=2,yscale=2]{\arrowOut};
        \filldraw (3.1,1) node[above]{\rotatebox[origin=c]{330}{$\dots$}};
\end{tikzpicture}\ \ \ \ \ \ \ \ \ \ \ \ \
\begin{tikzpicture}[scale=0.8]
        \tikzcdset{arrow style=tikz}
	\filldraw (0,0) circle (2pt);
        \filldraw (0,1.7) circle (2pt);
        \filldraw (-0.6,1.5) circle (2pt);
        \filldraw (-1,1.1) circle (2pt);
        \filldraw (1,1.1) circle (2pt);
        \draw (0,1.7) -- (0,0) node[sloped,pos=0.5,xscale=2,yscale=2]{\arrowIn};
        \draw (-0.6,1.5) -- (0,0) node[sloped,pos=0.5,xscale=2,yscale=2]{\arrowIn};
        \draw (-1,1.1) -- (0,0) node[sloped,pos=0.5,xscale=2,yscale=2]{\arrowIn};
        \draw (1,1.1) -- (0,0) node[sloped,pos=0.5,xscale=2,yscale=2]{\arrowOut};
        \filldraw (0.5,1) node[above]{\rotatebox[origin=c]{330}{$\dots$}};
	\filldraw (2.6,0) circle (2pt);
        \filldraw (2.6,1.7) circle (2pt);
        \filldraw (2,1.5) circle (2pt);
        \filldraw (1.6,1.1) circle (2pt);
        \filldraw (3.6,1.1) circle (2pt);
        \draw (2.6,1.7) -- (2.6,0) node[sloped,pos=0.5,xscale=2,yscale=2]{\arrowIn};
        \draw (2,1.5) -- (2.6,0) node[sloped,pos=0.5,xscale=2,yscale=2]{\arrowIn};
        \draw (1.6,1.1) -- (2.6,0) node[sloped,pos=0.5,xscale=2,yscale=2]{\arrowIn};
        \draw (3.6,1.1) -- (2.6,0) node[sloped,pos=0.5,xscale=2,yscale=2]{\arrowOut};
        \filldraw (3.1,1) node[above]{\rotatebox[origin=c]{330}{$\dots$}};
        \node (a) at (0,0.2) {};
        \node (b) at (2.6,0.2) {};
        \draw[-] (a)  to [out=-90,in=-90, looseness=0.4] (b);
\end{tikzpicture}
\end{figure}
\begin{figure}[H]
\begin{tikzpicture}[scale=0.8]
        \tikzcdset{arrow style=tikz}
	\filldraw (0,0) circle (2pt);
        \filldraw (0,1.7) circle (2pt);
        \filldraw (-0.6,1.5) circle (2pt);
        \filldraw (-1,1.1) circle (2pt);
        \filldraw (1,1.1) circle (2pt);
        \draw (0,1.7) -- (0,0) node[sloped,pos=0.5,xscale=2,yscale=2]{\arrowIn};
        \draw (-0.6,1.5) -- (0,0) node[sloped,pos=0.5,xscale=2,yscale=2]{\arrowIn};
        \draw (-1,1.1) -- (0,0) node[sloped,pos=0.5,xscale=2,yscale=2]{\arrowIn};
        \draw (1,1.1) -- (0,0) node[sloped,pos=0.5,xscale=2,yscale=2]{\arrowOut};
        \filldraw (0.5,1) node[above]{\rotatebox[origin=c]{330}{$\dots$}};
	\filldraw (2,0) circle (2pt);
        \filldraw (2,1.7) circle (2pt);
        \filldraw (1.4,1.5) circle (2pt);
        \filldraw (1,1.1) circle (2pt);
        \filldraw (3,1.1) circle (2pt);
        \draw (2,1.7) -- (2,0) node[sloped,pos=0.5,xscale=2,yscale=2]{\arrowIn};
        \draw (1.4,1.5) -- (2,0) node[sloped,pos=0.5,xscale=2,yscale=2]{\arrowIn};
        \draw (1,1.1) -- (2,0) node[sloped,pos=0.5,xscale=2,yscale=2]{\arrowIn};
        \draw (3,1.1) -- (2,0) node[sloped,pos=0.5,xscale=2,yscale=2]{\arrowOut};
        \filldraw (2.5,1) node[above]{\rotatebox[origin=c]{330}{$\dots$}};
\end{tikzpicture}
\vspace{4mm}
            \label{fig:2}
\end{figure}

In the second graph displayed above, the edge connecting the two bottom vertices can have an arbitrary orientation.\bigskip

Let $V(\mathcal{D})=\{a,b,c,d,e,f,g\}$, $E(\mathcal{D})=\{(c,a),(d,a),(d,b),(e,b),(f,d),(g,d)\}$, and $w:V(\mathcal{D})\rightarrow\ZZ_{\ge1}$ defined by $w(a)>1$, $w(b)>1$, and $w(v)=1$ for all other vertices $v$. Then, $\mathcal{D}=(V(\mathcal{D}),E(\mathcal{D}),w)$ is an example of weighted oriented forest with matching number $\nu(\mathcal{D})=3$ whose last matching power is polymatroidal.
\begin{figure}[H]
	\begin{tikzpicture}
		\tikzcdset{arrow style=tikz}
		\filldraw (0,0) circle (2pt) node[below]{$a$};
		\filldraw (-1,1.1) circle (2pt) node[below]{$c$};
		\filldraw (1,1.1) circle (2pt) node[below]{$d$};
		\draw (-1,1.1) -- (0,0) node[sloped,pos=0.5,xscale=2,yscale=2]{\arrowIn};
		\draw (1,1.1) -- (0,0) node[sloped,pos=0.5,xscale=2,yscale=2]{\arrowOut};
		\filldraw (2,0) circle (2pt) node[below]{$b$};
		\filldraw (3,1.1) circle (2pt) node[below]{$e$};
		\draw (1,1.1) -- (2,0) node[sloped,pos=0.5,xscale=2,yscale=2]{\arrowIn};
		\draw (3,1.1) -- (2,0) node[sloped,pos=0.5,xscale=2,yscale=2]{\arrowOut};
		\filldraw (0,2.2) circle (2pt) node[above]{$f$};
		\filldraw (2,2.2) circle (2pt) node[above]{$g$};
		\draw (0,2.2) -- (1,1.1) node[sloped,pos=0.5,xscale=2,yscale=2]{\arrowIn};
		\draw (2,2.2) -- (1,1.1) node[sloped,pos=0.5,xscale=2,yscale=2]{\arrowOut};
	\end{tikzpicture}
	\vspace{4mm}
\end{figure}

Identifying the vertices with the variables, we obtain
\begin{align*}
	I(\mathcal{D})^{[3]}\ &=\ \big((ca^{w(a)})(fd)(eb^{w(b)}),(ca^{w(a)})(gd)(eb^{w(b)})\big)\\
	&=\ (a^{w(a)}b^{w(b)}cde)(f,g).
\end{align*}
This ideal is clearly polymatroidal because it is the product of the polymatroidal ideals $(a^{w(a)}b^{w(b)}cde)$ and $(f,g)$, see \cite[Theorem 12.6.3]{HHBook2011}.


\begin{thebibliography}{99}
            
	\bibitem{BH2013} S. Bandari, J. Herzog, \textit{Monomial localizations and polymatroidal ideals}, Eur. J. Combin. {\bf 34} (2013) 752--763.

		\bibitem{BHZN18} M. Bigdeli, J. Herzog, R. Zaare-Nahandi, \textit{On the index of powers of edge ideals}, Comm. Algebra, {\bf 46} (2018), 1080--1095.

	    \bibitem{CFts1} M. Crupi, A. Ficarra, \textit{Linear resolutions of t-spread lexsegment ideals via Betti splittings}, Journal of Algebra and Its Applications, doi 10.1142/S0219498824500725
	
		\bibitem{CFL} M. Crupi, A. Ficarra, E. Lax, \textit{Matchings, squarefree powers and Betti splittings}, 2023, available at \url{https://arxiv.org/abs/2304.00255}
		
		\bibitem{EF} N. Erey, A. Ficarra, \textit{Matching Powers of monomial ideals and edge ideals of weighted oriented graphs} (2023), preprint \url{https://arxiv.org/abs/2309.13771}
		
		\bibitem{EH2021} N. Erey, T. Hibi, \textit{Squarefree powers of edge ideals of forests}, Electron. J. Combin., {\bf 28} (2) (2021), P2.32.
		
		\bibitem{EHHM2022a} N. Erey, J. Herzog, T. Hibi, S. Saeedi Madani, \textit{Matchings and squarefree powers of edge ideals}, J. Comb. Theory Series. A, {\bf 188} (2022).
		
		\bibitem{EHHM2022b} N.~Erey, J.~Herzog, T.~Hibi, S.~Saeedi Madani, \textit{The normalized depth function of squarefree powers}, Collect. Math. (2023). https://doi.org/10.1007/s13348-023-00392-x
		


        \bibitem{FPack2} A.~Ficarra, \textit{Matching Powers: Macaulay2 Package}, 2023, preprint \url{https://arxiv.org/abs/2312.13007}.
  
		\bibitem{FHH23} A.~Ficarra, J. Herzog, T. Hibi, \textit{Behaviour of the normalized depth function}, Electron. J. Comb., {\bf 30} (2), (2023) P2.31.
		
		\bibitem{FHT2009} C.~A.~Francisco, H.~T.~Hà, A.~Van Tuyl, \textit{Splittings of monomial ideals}, Proc. Amer. Math. Soc., {\bf 137} (10) (2009), 3271-3282.

		\bibitem{GDS} D.~R.~Grayson, M.~E.~Stillman. {\em Macaulay2, a software system for research in algebraic geometry}. Available at \url{http://www.math.uiuc.edu/Macaulay2}.
  
		\bibitem{HLMRV} H.T. Hà, K.N. Lin, S. Morey, E. Reyes, R.H. Villarreal, \textit{Edge ideals of oriented graphs}, International Journal of Algebra and Computation, 29 (03), (2019), 535--559.
  
		\bibitem{HHBook2011} J.~Herzog, T.~Hibi. \emph{Monomial ideals}, Graduate texts in Mathematics {\bf 260}, Springer, 2011.
		
		\bibitem{J2004} S. Jacques, \textit{Betti numbers of graph ideals}, 2004, Ph.D. thesis, University of Sheffield, Great Britain.

        \bibitem{PRT} Y. Pitones, E. Reyes, J. Toledo, \textit{Monomial ideals of weighted oriented graphs}, Electron. J. Combin., {\bf 26} (2019), no. 3, Research Paper P3.44.
  
		\bibitem{SASF2022} S. A. Seyed Fakhari, \textit{On the Castelnuovo-Mumford regularity of squarefree powers of edge ideals}, 2022, available at \url{arxiv.org/abs/2303.02791}
		
		\bibitem{SASF2023} S. A. Seyed Fakhari, \textit{On the Regularity of squarefree part of symbolic powers of edge ideals}, 2023, available at \url{arxiv.org/abs/2207.08559}
		
		\bibitem{SASF2023-2} S. A. Seyed Fakhari, \textit{An increasing normalized depth function}, 2023, available at \url{https://arxiv.org/abs/2309.13892}
		
		\bibitem{W} D. J. A. Welsh, \textit{Matroid Theory}, Academic Press, London, New York, 1976.
	\end{thebibliography}
\end{document}